\documentclass{amsart}
\usepackage{amsmath,amssymb,amsthm,amscd,xypic}
\usepackage{color}
\usepackage{enumerate}
\usepackage[T1]{fontenc} 
\usepackage{hyperref}

\title{On a characterization of (co)silting objects}

\author{Simion Breaz}
\address{Babe\c s--Bolyai University, Faculty of Mathematics and Computer Science, Department of Mathematics,
1 Mihail Kog\u alniceanu, 400084 Cluj--Napoca, Romania}
\email{bodo@math.ubbcluj.ro, simion.breaz@ubbcluj.ro}


\date{\today}




\newcommand{\Z}{\mathbb{Z}}

\DeclareMathOperator{\Hom}{Hom}


\newcommand{\A}{\mathcal{A}}
\newcommand{\B}{\mathcal{B}}

\newcommand{\Y}{\mathcal{Y}}
\newcommand{\C}{\mathcal{C}}
\newcommand{\D}{\mathcal{D}}

\newcommand{\U}{\mathcal{U}}
\newcommand{\X}{\mathcal{X}}
\newcommand{\V}{\mathcal{V}}



\newcommand{\add}{\mathrm{add}}
\newcommand{\Add}{\mathrm{Add}}

\newcommand{\Prod}{\mathrm{Prod}}

\newcommand{\thick}{\mathrm{thick}\,}

\newcommand{\codim}{\mathrm{codim}}


\theoremstyle{plain}
\newtheorem{thm}{Theorem}[section]
\newtheorem{lemma}[thm]{Lemma}
\newtheorem{prop}[thm]{Proposition}
\newtheorem{cor}[thm]{Corollary}

\newtheorem{constr}[thm]{Construction}
\newtheorem{thm-intro}{Theorem}

\theoremstyle{definition}

\theoremstyle{remark}
\newtheorem{rem}[thm]{Remark}

\newtheorem{claim}{Claim}

\begin{document}


\begin{abstract} We prove that an object $U$ in a triangulated category with coproducts is silting if and only if it is a (weak) generator of the category, the orthogonal class $U^{\perp_{>0}}$ contains $U$, and $U^{\perp_{>0}}$ is closed under direct sums. The proof can be dualized to provide a characterization for cosilting objects in triangulated categories with products.
\end{abstract}

\subjclass[2010]{18G80, 18E40}

\keywords{triangulated category, t-structure, silting object, cosilting object}

\maketitle


\section{Introduction}

Let $\D$ be a triangulated category with direct sums. If $U\in \D$ and $n$ is an integer, we denote $U^{\perp_{>n}}=\{X\in \D\mid \Hom_\D(U, X[p])=0 \textrm{ for all }p>n\}$. The classes $U^{\perp_{\leq n}}$, $^{\perp_{>n}}U$, $^{\perp_{\leq n}}U$ are defined in the same way. A t-structure in $\D$ is a pair of subcategories $(\U,\V)$ such that $\Hom_\D(\U,\V)=0$ (i.e., $\Hom_\D(U,V)=0$ for all $U\in \U$ and $V\in \V$), $\U[1]\subseteq \U$, $\V[-1]\subseteq \V$, and for every object $X\in \D$ there exists a triangle $U\to X\to V\to U[1]$ with $U\in \U$ and $V\in \V$. In these conditions we have
$\V=\U^{\perp_0}$ and $\U={^{\perp_{0}}\V}$.   

An object $U\in \D$ is called \textit{silting} if $(U^{\perp_{>0}}, U^{\perp_{\leq 0}})$ is a t-structure. Dually, $U$ is called \textit{cosilting} if 
$({^{\perp_{\leq 0}}U},{^{\perp_{>0}} U})$ is a t-structure. We refer to \cite{An-19} for a survey about these objects.

From the above definition, it is easy to see that if $U$ is silting then it has the following properties:
\begin{itemize}
	\item[(S1)] $U\in U^{\perp_{>0}}$,
	\item[(S2)] $U^{\perp_{>0}}$ is closed under direct sums,
	\item[(S3)] $U^{\perp_\Z}=0$.
\end{itemize} 
For many reasonable categories, an object $U$ is silting if and only if it satisfies (S1), (S2), and (S3).
The proof for this characterization is often based on the fact that in these categories the class $ U^{\perp_{\leq 0}}$ is a coaisle of a t-structure e.g., in \cite[Proposition 4.13]{PV}. In fact, this proof can be extended to well-generated triangulated categories, since we know from \cite{Nee18} that in well-generated triangulated categories we always have t-structures of the form $(\overline{\langle U\rangle}^{[-\infty,0]}, U^{\perp_{\leq 0}})$, where $\overline{\langle U\rangle}^{[-\infty,0]}$ is the smallest subcategory that contains $U$, is closed under positive shifts, coproducts and extensions (hence, under the hypotheses (S1) and (S2) we have $\overline{\langle S\rangle}^{[-\infty,0]}\subseteq U^{\perp_{>0}}$).

Neeman's proof presented in \cite{Nee18} is based on the fact that well-generated categories satisfy Brown Representability Theorem. Since it is not known if these categories also satisfy the Brown Representability Theorem for the dual, the above proof cannot be dualized to obtain a similar result for cosilting objects. However, such a characterization is known when ${ ^{\perp_{>0}}U}$ is already a coaisle of a t-structure, e.g. for pure-injective objects in compactly generated categories (see \cite{AMV17} and \cite{HHZ21}). 

The main aim of the present paper is to show, using a proof that can be dualized, that in all triangulated categories with coproducts an object $U$ is silting objects if and only if it satisfies the conditions (S1), (S2), and (S3). Therefore, we also conclude that an object $U$ in a triangulated category with products is cosilting if and only if  
\begin{itemize}
	\item[(C1)] $U\in { ^{\perp_{>0}}U}$,
	\item[(C2)] $^{\perp_{>0}}U$ is closed under direct products,
	\item[(C3)] $^{\perp_\Z}U=0$.
\end{itemize}

All these are consequences of Theorem \ref{thm:main-silting}, where we assume that there is a \textit{cocomplete pre-aisle} subcategory (i.e. it is closed under direct sums, extensions, and positive shifts, cf. \cite[Definition 1.1]{Nee18}) $\A$ such that $U\in \A$ and $\Hom_\D(U,\A[n])=0$ for some positive integer $n$. Moreover, in this case, we can replace the conditions (S1) and (S2) with the hypothesis  
\begin{itemize}
	\item[(S1.5)] $\Add (U)\subseteq U^{\perp_{>0}}$, 
\end{itemize}
where $\Add(U)$ represents the class of all direct summands in direct sums of copies of $U$.

A version of Theorem \ref{thm:main-silting}, when $\A$ is already an aisle of a t-structure, is proved in \cite[Theorem 2]{NSZ}. In the case when $\A$ is the aisle induced by a silting object $V$, we will say that $U$ is \textit{ $n$-$V$-intermediate}. These objects are studied in literature especially when $\D$ is the derived category of a ring and $\A$ is the aisle of the standard t-structure (see \cite{AMV:2015} , \cite{Wei-isr}).   
It is often used (e.g., in \cite[Lemma 5.4]{An-19}) the fact that in the presence of the Brown Representability Theorem, there exists a co-t-structure $(\X, V^{\perp_{>0}})$. For other examples where this co-t-structure is used, we refer to \cite{IJY}. 
We will present in Section \ref{intermediate} some characterizations for the $n$-$V$-intermediate silting objects in triangulated categories with coproducts that can be obtained without using the existence of co-t-structures. The proofs can also be dualized to obtain similar statements for cosilting objects.

In the following, $\D$ will be a triangulated category. All triangles used in this paper will be distinguished triangles. If $A\overset{\alpha}\to B\to X\to A[1]$ is a triangle in $\D$ then the morphism $B\to X$ will be called \textit{a cone} of $\alpha$. We will also use the term \textit{cone} for the object $X$  (that is unique up to isomorphism). 
For other properties valid in triangulated categories, we refer to \cite{Neeman}.

\section{A characterization for (co) silting in categories with direct sums (products)}\label{sect2}

If $\X$ and $\Y$ are classes from $\D$ then 
\begin{align*}\X\ast \Y=\{Z\in \D\mid & \textrm{ there exists a triangle } \\ & X\to Z\to Y\to X[1] \textrm{ with } X\in\X \textrm{ and }Y\in\Y\}.\end{align*}
We recall some basic properties for $\ast$. 

\begin{lemma}\label{lem:compuneri} The following statements are true: 
	\begin{enumerate}[{\rm 1)}] \item The operation $\ast$ is associative.
		
\item If $A\overset{\alpha}\to B\to X\to A[1]$, $B\overset{\beta}\to C\to Y\to B[1]$, and $A\overset{\beta\alpha}\to C\to Z\to A[1]$ are triangles then $Z\in X\ast Y$.

\item If $\X$ and $\Y$ are closed with respect to direct sums then $\X\ast \Y$ is closed under direct sums. 

\item  \cite[Proposition 2.7]{IyY} If $\X$ is closed under finite direct sums and under direct summands, and $\Hom_\D(\X,\X[i])=0$ for all $i=\overline{1,n}$ then $\X\ast\dots\ast \X[n]$ is closed under direct summands.
\end{enumerate} 
\end{lemma}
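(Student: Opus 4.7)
The four items use different machinery, so I would tackle them separately. The overarching plan: (2) is the octahedral axiom in disguise; (1) follows from (2) by unpacking the definition; (3) is immediate from the fact that a coproduct of distinguished triangles is distinguished in a triangulated category with coproducts; (4) is already cited from \cite{IyY}, so I would simply invoke that reference rather than reprove it.

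I would prove (2) first. Applying the octahedral axiom to the composable pair $A\overset{\alpha}\to B\overset{\beta}\to C$, together with the triangles in the statement built on $\alpha$, $\beta$, and $\beta\alpha$, produces a fourth distinguished triangle $X\to Z\to Y\to X[1]$. That triangle is exactly the witness needed to conclude $Z\in X\ast Y$.

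Part (1) is then an unpacking exercise using (2). For one inclusion, take $Z\in(\X\ast\Y)\ast\CZ$: choose a triangle $W\to Z\to C\to W[1]$ with $C\in\CZ$ and $W\in\X\ast\Y$, and a second triangle $A\to W\to B\to A[1]$ with $A\in\X$, $B\in\Y$. Form the cone $D$ of the composition $A\to W\to Z$ to obtain a third triangle $A\to Z\to D\to A[1]$; applying (2) to these three triangles yields $D\in B\ast C\subseteq\Y\ast\CZ$, and the triangle $A\to Z\to D\to A[1]$ exhibits $Z\in\X\ast(\Y\ast\CZ)$. The reverse inclusion is entirely symmetric, by forming the cone of $Z\to C$ instead.

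For (3), given a family $Z_i\in\X\ast\Y$ with witnessing triangles $X_i\to Z_i\to Y_i\to X_i[1]$, the coproduct $\bigoplus X_i\to\bigoplus Z_i\to\bigoplus Y_i\to (\bigoplus X_i)[1]$ is again distinguished in $\D$; closure of $\X$ and $\Y$ under direct sums then places $\bigoplus Z_i$ in $\X\ast\Y$. For (4), I would quote \cite[Proposition 2.7]{IyY} directly. The main obstacle in the whole lemma is precisely (4): it is the only statement with substantive content, depending on the Iyama--Yoshino argument which exploits the vanishing $\Hom_\D(\X,\X[i])=0$ to split off direct summands through an inductive application of the octahedral axiom; by contrast, (1)--(3) are routine consequences of the triangulated axioms together with the existence of coproducts.
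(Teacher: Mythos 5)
Your proposal is correct. The paper states Lemma~\ref{lem:compuneri} without proof (these are standard facts about the $\ast$-operation, with item~4 explicitly deferred to \cite[Proposition 2.7]{IyY}), and your arguments are the expected ones: item~(2) is a direct reading of the octahedral axiom, item~(1) follows by the two applications of (2) you describe (the reverse inclusion using the rotated triangles on $Z\to W$ and $W\to C$), item~(3) is the fact that a coproduct of distinguished triangles is distinguished, and item~(4) is correctly quoted.
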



If $\D$ is a triangulated category with  coproducts, a full subcategory $\A$ of $\D$ is a \textit{cocomplete pre-aisle} if $\A$ is closed under extensions,  direct sums, direct summands, and positive shifts ($\A[1]\subseteq \A$). Consequently, if $A,B\in\A$ and $A\to B\to C\to A[1]$ is a triangle, then $C\in \A$.  

The main aim of this section is to prove the following 

\begin{thm}\label{thm:main-silting}
Let $\D$ be a triangulated category with coproducts. Suppose that $U\in \D$ is an object such that: 
\begin{itemize}
	\item[(S1.5)] $\Add (U)\subseteq U^{\perp_{>0}}$, and
	\item[(S3)] $U^{\perp_{\Z}}=0$.
\end{itemize}

Assume that there exists a cocomplete pre-aisle $\A$ in $\D$ with the following properties: 
\begin{enumerate}[{\rm (1)}]
	\item $U\in \A$, and
	\item there exists a positive integer $n$ such that $\Hom_\D(U, \A[n])=0$.
\end{enumerate}

Then $U$ is a silting object.
\end{thm}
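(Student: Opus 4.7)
The plan is to exhibit the candidate silting aisle concretely and then produce approximation triangles. Let $\T$ denote the smallest cocomplete pre-aisle of $\D$ containing $U$. Because each of the operations defining $\T$ (coproducts, extensions, positive shifts, direct summands) preserves membership in $U^{\perp_{>0}}$ once applied to objects already in $U^{\perp_{>0}}$, an induction on the construction of $\T$ combined with (S1.5) yields $\T\subseteq U^{\perp_{>0}}$. Checking the same closures against $\Hom_\D(-,Y)$ for $Y\in U^{\perp_{\leq 0}}$ gives the identification $\T^{\perp_0}=U^{\perp_{\leq 0}}$. By minimality $\T\subseteq\A$.

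The main task is then to produce, for each $X\in\D$, a triangle $T_X\to X\to Y_X\to T_X[1]$ with $T_X\in\T$ and $Y_X\in U^{\perp_{\leq 0}}$. I would build this iteratively: set $X_0=X$ and, at stage $k\geq 0$, form the canonical evaluation map $U[k]^{(J_k)}\to X_k$ with $J_k=\Hom_\D(U[k],X_k)$, defining $X_{k+1}$ as its cone. Applying $\Hom_\D(U[j],-)$ to the resulting triangle and invoking (S1.5), a straightforward induction on $k$ shows $\Hom_\D(U[j],X_{k+1})=0$ for $0\leq j\leq k$. Repeated use of the octahedron (Lemma~\ref{lem:compuneri}) produces at each stage a cumulative triangle $T_k\to X\to X_{k+1}\to T_k[1]$ with $T_k\in\T$ and compatible transitions between stages. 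Passing to sequential homotopy colimits yields $T_\infty=\hocolim T_k\in\T$ (since $\T$ is closed under coproducts and extensions, hence under sequential hocolim) and $Y_X=\hocolim X_k$, sitting in a triangle $T_\infty\to X\to Y_X\to T_\infty[1]$.

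The main obstacle is to verify that $Y_X\in U^{\perp_{\leq 0}}$. For each fixed $p\leq 0$ the groups $\Hom_\D(U,X_k[p])$ vanish once $k>-p$, but the long exact sequence attached to the Milnor triangle $\bigoplus X_k\to\bigoplus X_k\to Y_X\to\bigoplus X_k[1]$ does not collapse to a $\lim^1$-sequence, because $U$ is not assumed compact on coproducts. This is precisely where the pre-aisle $\A$ and the vanishing $\Hom_\D(U,\A[n])=0$ enter: after reducing to the case $X\in\A$, all intermediate $X_k$ and the coproduct $\bigoplus X_k$ remain in $\A$, so the Hom-groups appearing in the long exact sequence vanish beyond shift $n$. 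Combining this upper bound with the iterative vanishing at nonpositive shifts should force $\Hom_\D(U,Y_X[p])=0$ for all $p\leq 0$. Carrying out this bookkeeping inside $\A$ is the technical heart of the argument.

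Once the approximation triangles exist, $(\T,U^{\perp_{\leq 0}})$ is a t-structure. To upgrade this to $(U^{\perp_{>0}},U^{\perp_{\leq 0}})$ and conclude that $U$ is silting, take any $X\in U^{\perp_{>0}}$ and apply the approximation: in the triangle $T\to X\to Y\to T[1]$ with $T\in\T\subseteq U^{\perp_{>0}}$, the long exact $\Hom_\D(U,-)$-sequence forces $Y\in U^{\perp_{>0}}$, whence $Y\in U^{\perp_{\leq 0}}\cap U^{\perp_{>0}}=U^{\perp_\Z}$, which vanishes by (S3). Hence $X\cong T\in\T$, so $\T=U^{\perp_{>0}}$ and $(U^{\perp_{>0}},U^{\perp_{\leq 0}})$ is the required t-structure.
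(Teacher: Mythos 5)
Your overall strategy matches the paper's: iterate $\Add(U[k])$-precovers to build $X_0 = X\to X_1\to X_2\to\cdots$, pass to the homotopy colimit, and identify the coaisle piece with $Y_X = \hocolim X_k$. You also correctly flag the crux: without compactness, the Milnor triangle $\bigoplus X_k\to\bigoplus X_k\to Y_X$ does not give access to $\Hom_\D(U,Y_X[p])$ by termwise arguments. The problem is in how you propose to get around that.

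There are two gaps. First, the phrase ``after reducing to the case $X\in\A$'' hides an unjustified reduction. The t-structure decomposition is needed for every $X\in\D$, and nothing in the hypotheses lets you split a general $X$ into an $\A$-part and an $\A^{\perp_0}$-part: $\A$ is only a cocomplete pre-aisle, not the aisle of a t-structure (if it were, the theorem would reduce to the already-known \cite[Theorem 2]{NSZ} and the whole point of the argument would be lost). Second, even granting $X\in\A$, the bookkeeping you sketch does not close. Having $\bigoplus_k X_k\in\A$ only gives $\Hom_\D(U,\bigoplus_k X_k[p])=0$ for $p\geq n$, whereas to kill $\Hom_\D(U,Y_X[p])$ for $p\leq 0$ the long exact sequence requires control at shifts $p$ and $p+1$, i.e.\ at shifts $\leq 1$. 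The iterative vanishing $\Hom_\D(U,X_k[-\ell])=0$ for $\ell<k$ holds termwise but does not pass to the coproduct, since for each fixed $\ell$ there are infinitely many $k\leq\ell$ contributing unknown terms. These two ranges, $\geq n$ and ``termwise nonpositive,'' do not meet.

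The paper's missing ingredient is this: rather than analyzing $\bigoplus_k X_k$, one compares the full homotopy colimit $Z$ with the one computed from the tail $(X_i)_{i\geq k}$. The cone $C_k$ of $X_k\to Z$ is then built (via a $3\times 3$ diagram) from the coproduct of the difference cones $S_{ki}$, and crucially $S_{ki}\in\Add(U[k+1])\ast\cdots\ast\Add(U[i])\subseteq\A[k+1]$. Since $\A[k+1]$ shifts further and further to the right as $k$ grows, the hypothesis $\Hom_\D(U,\A[n])=0$ kills $\Hom_\D(U,C_k[-\ell])$ for all $0\leq\ell\leq k-n$, and this is independent of whether $X$ lies in $\A$. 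Combined with $\Hom_\D(U,X_k[-\ell])=0$ for $\ell<k$ from the inductive vanishing, the triangle $X_k\to Z\to C_k$ gives $Z\in U^{\perp_{\leq 0}}$ by letting $k$ run. This use of the $S_{ki}$'s and the truncated colimits $C_k$ is the technical heart you identify as needed but do not supply, and it is where the pre-aisle $\A$ actually enters. Your concluding step (using (S3) to force $\T=U^{\perp_{>0}}$ once the decomposition triangles exist, which is the content of the paper's Lemma~\ref{lem:t-str}) is fine.
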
 

The proof follows the main ideas from the proofs of \cite[Theorem 3.11]{BHM} and \cite[Theorem 2.3]{PoS}. Recall that, since $\D$ is an additive category with direct sums, for every pair of objects $K,X\in \D$ the canonical morphsim $f:K^{(I)}\to X$, where $I=\Hom_{\D}(K,X)$, is an $\Add(K)$-precover. This means that for every $M\in \Add(K)$ the morphism $\Hom_{\D}(M,f)$ is surjective. We need the following

\begin{constr}\label{constr-sir}
Let $U$ be an object from $\D$. If $X\in \D$, we construct inductively a sequence of morphisms $f_k:X_k\to X_{k+1}$ in the following way:
\begin{enumerate}[{ (i)}]
	\item $X_0=X$;
	\item If $X_k$ is constructed then $f_k:X_k\to X_{k+1}$ will be a cone of an $\Add(U[k])$-preecover $U[k]^{(I_k)}\to X_k$ (it completes the precover to a triangle).  
\end{enumerate} 

For every $i> k$, we consider the morphism $f_{ki}:X_k\to X_{i}$ that are obtained as the composition of the morphisms $f_k,\dots,f_{i-1}$. Moreover, $f_{ii}:X_i\to X_i$, $i\geq 0$, will be the indentity maps. We denote by $S_{ki}$ the cone of $f_{ki}$.
\end{constr}

We have the following properties:

\begin{lemma}\label{lem:basic-inductive}
Let $U$ be an object from $\D$ such that $\Add(U)\subseteq U^{\perp_{>0}}$. For the objects constructed in Construction \ref{constr-sir}, we have the following properties:
\begin{enumerate}[{\rm (a)}]
	\item if $k\geq 0$ then $\Hom_\D(U[j],X_{k+1})=0$ for all $j=\overline{0,k}$;
	\item if $k\leq i$ then $S_{ki}\in  \Add(U[k+1])\ast \dots \ast \Add(U[i])$.
\end{enumerate}	
\end{lemma}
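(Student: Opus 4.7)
My plan is to prove (a) and (b) by two separate inductions that interact only lightly: (a) by induction on $k$, and (b) by induction on the difference $i-k$. Throughout I would rely repeatedly on the defining triangles
\[ U[k]^{(I_k)} \to X_k \xrightarrow{f_k} X_{k+1} \to U[k]^{(I_k)}[1] \]
produced by completing each $\Add(U[k])$-precover, together with the Hom-vanishings furnished by (S1.5).

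For (a), I would apply $\Hom_{\D}(U[j],-)$ to the above triangle for each $j\in\{0,\ldots,k\}$ and read off the desired vanishing of $\Hom_{\D}(U[j], X_{k+1})$ from the resulting long exact sequence. Three ingredients feed the computation: first, the precover property gives surjectivity of $\Hom_{\D}(U[k], U[k]^{(I_k)}) \to \Hom_{\D}(U[k], X_k)$, handling $j=k$; second, the inductive hypothesis provides $\Hom_{\D}(U[j], X_k)=0$ for $j<k$; third, (S1.5) forces
\[\Hom_{\D}(U[j], U[k]^{(I_k)}[1]) \cong \Hom_{\D}(U, U^{(I_k)}[k+1-j]) = 0\]
for all $j\leq k$, since $k+1-j\geq 1$ and $U^{(I_k)}\in \Add(U)\subseteq U^{\perp_{>0}}$. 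The base case $k=0$ is the instance $j=k=0$ of the same argument.

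For (b), the base case $i=k+1$ is immediate: rotating the triangle above gives
\[ X_k \xrightarrow{f_k} X_{k+1} \to U[k+1]^{(I_k)} \to X_k[1], \]
so that $S_{k,k+1}=U[k+1]^{(I_k)}\in \Add(U[k+1])$. For the inductive step I would write $f_{k,i+1}=f_i\circ f_{ki}$ and apply Lemma~\ref{lem:compuneri}(2) to the triangles of $f_{ki}$, $f_i$, and their composition; this yields $S_{k,i+1}\in S_{ki}\ast S_{i,i+1}$. By the inductive hypothesis $S_{ki}\in\Add(U[k+1])\ast\cdots\ast\Add(U[i])$ and by the base case $S_{i,i+1}\in\Add(U[i+1])$, so the associativity of $\ast$ from Lemma~\ref{lem:compuneri}(1) concludes the step.

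I do not foresee any genuine obstacle: the argument is essentially bookkeeping, and the only delicate point is keeping the degree shifts straight so that (S1.5) is invoked in the correct range. The content of the lemma is that the construction kills one Hom-group per step while keeping each cone in a controlled $\ast$-filtration whose factors lie in $\Add(U[j])$ with strictly increasing $j$.
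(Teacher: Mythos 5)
Your proof is correct and follows essentially the same approach as the paper: for (a) you apply $\Hom_\D(U[j],-)$ to the defining triangle, use the precover surjectivity for $j=k$, the inductive hypothesis for $j<k$, and (S1.5) to kill the term $\Hom_\D(U[j], U[k]^{(I_k)}[1])$. For (b) the paper only says ``follows from Lemma~\ref{lem:compuneri}''; your elaboration via the octahedral-type Lemma~\ref{lem:compuneri}(2), induction on $i-k$, and associativity of $\ast$ is exactly what is intended.
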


\begin{proof}
(a) For all $k\geq 0$, in the exact sequence of abelian groups \begin{align*}\Hom_\D(U[k],U[k]^{(I_k)})\to \Hom_\D(U[k], X_k) & \to \Hom_\D(U[k], X_{k+1})\\ & \to \Hom_\D(U[k],U[k+1]^{(I_k)}),\end{align*}
the first morphism is surjective, hence the third morphism is zero. But the last group is also equal to zero. It follows that  $\Hom_\D(U[k], X_{k+1})=0$. 

Therefore, the property (a) is true for $k=0$. We will proceed by induction on $k$. Assuming that (a) is valid for $k-1$, that is  $\Hom_\D(U[j],X_{k})=0$ for all $j=\overline{0,k-1}$, we can use the exact sequences \[\Hom_\D(U[j], X_k)  \to \Hom_\D(U[j], X_{k+1}) \to \Hom_\D(U[j],U[k+1]^{(I_k)})\] 
to conclude that $\Hom_\D(U[j], X_{k+1})=0$ for all $j=\overline{0,k-1}$. Using what we already proved in the first part of the proof, we conclude that (a) is valid for $k$. 

(b) follows from Lemma \ref{lem:compuneri}.
\end{proof}

\begin{prop}
Let $U$ be an object from $\D$. Assume that there exists a full subcategory $\A$  cocomplete pre-aisle $\A$ in $\D$ such that 
\begin{itemize}
	\item[{\rm 1)}] $U\in \A$,
	\item[{\rm 2)}] $\Hom_\D(U, \A[n])=0$ for some positive integer $n$,
	\end{itemize}
and that $U$ satisfies the condition
\begin{itemize}
	\item[{\rm (S1.5)}] $\Add (U)\subseteq U^{\perp_{>0}}$.
\end{itemize}

Then for every $X\in \D$ there exists a triangle $Y\to X\to Z\to Y[1]$ such that 
\begin{enumerate}[{\rm (a)}]
	\item $Z\in U^{\perp_{\leq 0}}$,
	\item $Y\in  U^{\perp_{>0}}$,
	\item $\Hom_\D(Y, U^{\perp_{\leq 0}})=0$.
\end{enumerate}
\end{prop}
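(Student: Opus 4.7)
The plan is to apply Construction~\ref{constr-sir} to $X$, obtaining the telescope $X = X_0 \xrightarrow{f_0} X_1 \xrightarrow{f_1} X_2 \to \cdots$, setting $Z := \hocolim_{k} X_k$ with its canonical structure map $X \to Z$, and defining $Y$ by the triangle $Y \to X \to Z \to Y[1]$. A standard hocolim-of-triangles argument applied to the compatible family $X \to X_k \to S_{0,k}$ identifies $Y[1] \cong \hocolim_{k} S_{0,k}$. By Lemma~\ref{lem:basic-inductive}(b), together with $U \in \A$ and the closure of $\A$ under positive shifts, coproducts, direct summands, and extensions, each $S_{0,k}$ lies in $\A$; the defining hocolim triangle then displays $Y[1]$ as the cone of a morphism between objects of $\A$, hence itself in $\A$ (the cone of a morphism in $\A$ is in $\A$, since $\A$ is closed under positive shifts and extensions).

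Property (c), $\Hom_\D(Y, U^{\perp_{\leq 0}}) = 0$, is the most straightforward. For any $W \in U^{\perp_{\leq 0}}$ and any $j \geq 1$ one has $\Hom_\D(U[j]^{(I)}, W) = \prod_I \Hom_\D(U[j], W) = 0$ and similarly $\Hom_\D(U[j]^{(I)}, W[1]) = 0$; by induction on the $\ast$-length this propagates to $\Hom_\D(S_{0,k}, W) = 0 = \Hom_\D(S_{0,k}, W[1])$ for all $k$. Applying $\Hom_\D(-, W[1])$ to the hocolim triangle makes both boundary terms vanish, so $\Hom_\D(\hocolim S_{0,k}, W[1]) = \Hom_\D(Y, W) = 0$. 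Properties (a) and (b) use the hocolim with the cohomological functor $\Hom_\D(U, -)$. A short $\ast$-induction using $\Add(U) \subseteq U^{\perp_{>0}}$ shows that $\Hom_\D(U, S_{0,k}[q]) = 0$ for every $q \geq 0$ and every $k$. Applying $\Hom_\D(U, -[q])$ to the hocolim triangle, the pre-aisle bound $\Hom_\D(U, \A[n]) = 0$ annihilates $\Hom_\D(U, \bigoplus_k S_{0,k}[m])$ for every $m \geq n$, yielding (b) immediately in the degrees $q \geq n$; the finitely many residual degrees $0 \leq q < n$ are treated by exploiting the vanishing at the higher shift $q + 1 \geq n$ together with an explicit analysis of the telescope map $1 - \sigma$ guided by the $\ast$-filtration of the $S_{0,k}$. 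Property (a) follows by the analogous argument applied to the hocolim triangle for $Z$, using Lemma~\ref{lem:basic-inductive}(a), which supplies the termwise vanishing $\Hom_\D(U[j], X_k) = 0$ for $k > j$.

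The principal obstacle is that $U$ is not assumed compact, so $\Hom_\D(U, -)$ does not commute with the coproducts appearing in the hocolim formula; hence the termwise vanishing $\Hom_\D(U, S_{0,k}[q]) = 0$ does not translate automatically into $\Hom_\D(U, \hocolim_k S_{0,k}[q]) = 0$, and some extra input is required. The cocomplete pre-aisle $\A$ with the fixed positive integer $n$ supplies exactly the right amount of control: it forces $\Hom_\D(U, \bigoplus_k S_{0,k}[m]) = 0$ for every $m \geq n$, reducing the full problem to finitely many residual shifts $0 \leq q < n$, where the structural information encoded in Construction~\ref{constr-sir} can be exploited directly.
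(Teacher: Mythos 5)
Your overall framing is right and the setup matches the paper's: apply Construction~\ref{constr-sir}, take $Z$ to be the homotopy colimit of the $X_k$, set $Y$ so that $Y[1]\cong\hocolim_k S_{0k}$, and resolve the non-compactness of $U$ via the pre-aisle $\A$. Your argument for (c) is correct and essentially identical to the paper's. But the two central steps, (a) and (b), are sketched rather than proved, and the sketches do not obviously close.

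For (b), after reducing to $\Hom_\D(U,\oplus_k S_{0k}[q])=0$ for $0\le q<n-1$ (the degrees where $\oplus S_{0k}[q]$ is not visibly in $\A[n]$), you write that these are ``treated by exploiting the vanishing at the higher shift $\dots$ together with an explicit analysis of the telescope map $1-\sigma$.'' This is precisely the step that needs an argument, and the paper's proof does not in fact analyze $1-\sigma$. The paper's Claim~\ref{claim2} shows $\oplus_{i\ge 0}S_{0i}[-1]\in U^{\perp_{>0}}$ directly: one splits the coproduct into the finite part $\oplus_{i<n}$ (each term lies in $\Add(U)\ast\cdots\ast\Add(U[i-1])\subseteq U^{\perp_{>0}}$) and the tail $\oplus_{i\ge n}$, then uses the termwise triangles $S_{0n}\to S_{0i}\to S_{ni}$ to realize $\oplus_{i\ge n}S_{0i}[-1]$ as an extension of $\oplus_{i\ge n}S_{ni}[-1]\in\A[n]$ by $\oplus_{i\ge n}S_{0n}[-1]\in\Add(U)\ast\cdots\ast\Add(U[n-1])$. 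It is this splitting and this extension argument, not the telescope map itself, that close the low degrees; as written your sketch leaves the gap open.

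For (a) the problem is more serious. You propose to argue ``by the analogous argument applied to the hocolim triangle for $Z$,'' but the defining triangle $\oplus_k X_k\to\oplus_k X_k\to Z$ cannot be used this way: the objects $X_k$ are not in $\A$ (recall $X_0=X$ is arbitrary), so termwise vanishing $\Hom_\D(U[j],X_k)=0$ from Lemma~\ref{lem:basic-inductive}(a) does not control $\Hom_\D(U,\oplus_k X_k[-\ell])$, and there is no pre-aisle bound to rescue it. Indeed the very obstacle you identify in your last paragraph (non-compactness of $U$) recurs here, and this time without the structural remedy. The paper circumvents this by building, for each $k$, the full $3\times 3$ diagram via \cite[Proposition 4.23]{stacks}, whose rightmost column is the triangle $X_k\to Z\to C_k$. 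For $\ell\ge 0$ one then chooses $k=n+1+\ell$: Lemma~\ref{lem:basic-inductive}(a) kills $\Hom_\D(U,X_k[-\ell])$ (a single object, no coproduct), while Claim~\ref{claim1} kills $\Hom_\D(U,C_k[-\ell])$ because $C_k$ is the hocolim of the $S_{ki}\in\A[k+1]$, which the pre-aisle bound controls. You would need to introduce the objects $C_k$ and this ladder of triangles to make (a) go through; the bare hocolim triangle does not suffice.
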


\begin{proof}
Let $X\in\D$. We consider the system of morphisms from Construction \ref{constr-sir}. For $k,j\geq 0$, we denote by $X^{(j)}_k$ a copy of $X_k$, by $u_k^{(j)}:X_k^{(j)}\to X_k^{(i+1)}$ the identity morphism, and by $f_{ki}^{(j)}:X_k^{(j)}\to X_i$ the morphism $f_{ki}$. We have the equality $f_if_{ki}^{(i)}=f_{k(i+1)}^{(i+1)}u_k^{(i)}$. 

Let $k\geq 0$. Using \cite[Proposition 4.23]{stacks} we observe that there exists a commutative diagram 
\[\xymatrix{ \oplus_{i\geq k}X_k^{(i)}  \ar[rr]^{1-\oplus_{i\geq k}u_k^{(i)}} \ar[d]^{\oplus_{i\geq k}f^{(i)}_{ki}} & &  \oplus_{i\geq k}X_k^{(i)} \ar[d]^{\oplus_{i\geq k}f^{(i)}_{ki}} \ar[r] & X_k\ar[d] \\
\oplus_{i\geq k}X_i  \ar[rr]^{1-\oplus_{i\geq k}f_i}\ar[d] & &  \oplus_{i\geq k}X_i \ar[r]\ar[d] & Z\ar[d] \\
\oplus_{i\geq k}S_{ki}\ar[rr]  & & \oplus_{i\geq k}S_{ki} \ar[r] & C_k 
}\]
such that all lines and columns are triangles. Note that, up to isomorphism, $Z$ does not depends on $k$ since it is the homotopy colimit of the sequence $(f_i)_{i\geq 0}$. 

\begin{claim}\label{claim1}
Suppose that $k>n$ is an integer, and $0\leq \ell\leq k-n$. Then $$\Hom_\D(U,C_k[-\ell])=0.$$
\end{claim}

For every $i\geq k$ we have $S_{ki}\in  \Add(U[k+1])\ast \dots \ast \Add(U[i])\subseteq \A[k+1]$ (recall that $S_{kk}=0$). It follows that for every $\ell$ such that  $0\leq \ell\leq k-n$, we have $\oplus_{i\geq k}S_{ki}[-\ell]\in \A[n]$, hence   $\Hom_\D(U,\oplus_{i\geq k}S_{ki}[-\ell])=0$. Moreover, $\oplus_{i\geq k}S_{ki}[-\ell+1]\in \A[n]$ since $\A$ is closed under positive shifts. 
Therefore, $\Hom_\D(U,\oplus_{i\geq k}S_{ki}[-\ell+1])=0$. It follows that the claim is true.


\begin{claim}\label{claim2}
$\oplus_{i\geq 0}S_{0i}[-1]\in U^{\perp_{> 0}}$.	
\end{claim}

As before, for every $i\geq n$ we have $S_{ni}\in\A[n+1]$, hence $\oplus_{i\geq n}S_{ni}\in\A[n+1]$. Since $\A[n]$ is closed with respect to positive shifts, it follows that 
$$\Hom_\D(U, \oplus_{i\geq n}S_{ni}[-1][p+1])=\Hom_\D(U, \oplus_{i\geq n}S_{ni}[p])=0 \textrm{ for all }p\geq 0,$$
hence $\oplus_{i\geq n}S_{ni}[-1]\in U^{\perp_{> 0}}$.

Moreover, for every $i\geq n$ we have a triangle $$S_{0n}\to S_{0i}\to S_{ni}\to S_{0n}[1].$$ The direct sum of these triangles induces a triangle  
$$ \oplus_{i\geq n} S_{0n}\to \oplus_{i\geq n} S_{0i}\to \oplus_{i\geq n} S_{ni}\to \oplus_{i\geq 0}S_{0n}[1].$$
Observe that for every $i>0$ we have $$S_{0i}[-1]\in \Add(U)\ast \dots \ast \Add(U[i-1]),$$ and this class is closed under direct sums. Moreover, since all the classes $\Add(U)$, $\dots$, $\Add(U[i-1])$ are contained in $U^{\perp_{>0}}$, we obtain $$\Add(U)\ast \dots \ast \Add(U[i-1])\subseteq U^{\perp_{>0}}.$$ 

In particular, for $i=n$ it follows that $$\oplus_{i\geq n} S_{0n}[-1]\in \Add(U)\ast \dots \ast \Add(U[n-1])\subseteq U^{\perp_{>0}},$$ and using the above triangle, we obtain $ \oplus_{i\geq n}S_{0i}[-1]\in U^{\perp_{>0}}$. 

Since $S_{00}=0$, we also have  $\oplus_{0\leq i< n}S_{0i}[-1]\in  U^{\perp_{> 0}}$, so
 the proof of the claim is complete.


\medskip

We will prove that $Z$ verify the condition (a) and that  $Y=C_0[-1]$ verifies (b) and (c).


If we consider an integer
$\ell\geq 0$, and we take $k=n+1+\ell$, we apply Lemma \ref{lem:basic-inductive} to observe that $\Hom_\D(U,X_k[-\ell])=0$. 
From the triangle $$X_k\to Z\to C_k\to X_k[1]$$ and Claim \ref{claim1} we conclude that $\Hom_\D(U,Z[-\ell])=0$. Since $\ell$ was chosen arbitrarily, it follows that $Z\in U^{\perp_{\leq 0}}$.

From Claim \ref{claim2} it follows that $Y=C_0[-1]$ verifies (b). For the property (c), we first observe that $U, U[1], \dots, U[i]\in {^{\perp_0}(U^{\perp_{\leq 0}})}$. The class ${^{\perp_0}(U^{\perp_{\leq 0}})}$ is closed under direct sums, direct summands, and extensions. Since $$S_{0i}\in  \Add(U[1])\ast \dots \ast \Add(U[i]),$$ we obtain the equalities  $$\Hom_\D(\oplus_{i\geq 0}S_{0i},U^{\perp_{\leq 0}})=0$$ and $$\Hom_\D(\oplus_{i\geq 0}S_{0i}[-1],U^{\perp_{\leq 0}})=0.$$ Using the bottom triangle from the commutative diagram presented in the beginning of the proof, it follows that $\Hom_\D(Y, U^{\perp_{\leq 0}})=0$.
%
\end{proof}

In order to complete the proof of Theorem \ref{thm:main-silting}, we only need to observe, using (S3), that $U^{\perp_{>0}}\cap U^{\perp_{\leq 0}}=0$, and to apply the following

\begin{lemma}\label{lem:t-str}
Let $\D$ be a triangulated category. If $\U$ and $\V$ are subcategories in $\D$ such that $\U\cap \V=0$, $\U[1]\subseteq \U$, $\V[-1]\subseteq \V$, and for every $X\in \D$ there exists a triangle $U\to X\to V\to U[1]$ such that $U\in \U$, $V\in \V$, and $\Hom(U,\V)=0$ then $(U,V)$ is a t-structure.	
\end{lemma}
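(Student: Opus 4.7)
The plan is to verify the only missing t-structure axiom, namely $\Hom_\D(\U,\V)=0$; the shift closures and the existence of the decomposition triangles are already hypothesised.

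First, I would extract a reflection property from the special triangle. Applying $\Hom(-,V')$ with $V'\in\V$ to the triangle $U\to X\to V\to U[1]$, the term $\Hom(U,V')$ vanishes by the assumption $\Hom(U,\V)=0$, and $\Hom(U[1],V')=\Hom(U,V'[-1])=0$ because $V'[-1]\in\V$ by $\V[-1]\subseteq\V$. The long exact sequence therefore collapses to an isomorphism $\Hom(V,V')\cong\Hom(X,V')$ realised by precomposition with $X\to V$, so $V$ is a left $\V$-reflection of $X$.

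Second, given $U'\in\U$ and $V'\in\V$, I would decompose $U'$ as $U\to U'\to V\to U[1]$ via the hypothesis. The reflection isomorphism from the previous step yields $\Hom(U',V')\cong\Hom(V,V')$, so it suffices to prove $V=0$. Choosing $V'=V$, the iso $\Hom(V,V)\cong\Hom(U',V)$ sends $\id_V$ to the middle map $b\colon U'\to V$; hence $V=0$ is equivalent to $b=0$.

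Finally, I would argue that $V$ must lie in $\U$: the triangle $U\to U'\to V\to U[1]$ has three corners $U,U',U[1]$ all in $\U$ (using $\U[1]\subseteq\U$), and closure of $\U$ under cones forces $V\in\U$. Together with $V\in\V$ and the hypothesis $\U\cap\V=0$, this gives $V=0$ and hence $\Hom(U',V')=0$, completing the verification that $(\U,\V)$ is a t-structure.

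The main obstacle is the last step, namely justifying $V\in\U$ from the abstract hypotheses. In the application of the lemma---where $\U=U^{\perp_{>0}}$ is an orthogonal class---this closure under cones is immediate from the long exact sequence of $\Hom(U,-)$ applied to the triangle, so the plan goes through cleanly; abstractly, one has to read the closure into the standing assumptions on subcategories.
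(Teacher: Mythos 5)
Your proof is correct and matches the paper's: the paper's one-line argument also concludes $V\in\V\cap\U$ (hence $V=0$, hence $U\cong X$, hence $\Hom(X,\V)=\Hom(U,\V)=0$). You are right to flag the step $V\in\U$: it requires $\U$ to be closed under cones (equivalently, under extensions), which is not among the lemma's stated hypotheses, and the paper's proof assumes it silently. In the one place the lemma is invoked, $\U=U^{\perp_{>0}}$ is an orthogonal class, so the closure is automatic and the application is unaffected; but the lemma as stated is slightly imprecise, and you identified the exact missing hypothesis. Your preliminary reflection isomorphism $\Hom(X,V')\cong\Hom(V,V')$ is a harmless detour relative to the paper's finish: once $V=0$, the triangle collapses to $U\cong X$, and $\Hom(U,\V)=0$ gives the conclusion directly.
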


\begin{proof}
Let $X\in \U$, and consider the triangle $U\to X\to V\to U[1]$ as before. Then $V\in \V\cap \U$, hence $V=0$. Hence $U\cong X$, so $\Hom(X,\V)=0$. 
\end{proof}	

Now, we can prove that silting objects in triangulated categories with coproducts are characterized by the conditions (S1), (S2) and (S3). 

\begin{cor}\label{cor:car-silt}
Assume the $\D$ is a triangulated category with coproducts. An object $U\in\D$ is silting if and only if the following conditions are true:
\begin{itemize}
	\item[(S1)] $U\in U^{\perp_{>0}}$;
	\item[(S2)] $U^{\perp_{>0}}$ is closed under direct sums;
	\item[(S3)] $U^{\perp_\Z}=0$.
\end{itemize}
\end{cor}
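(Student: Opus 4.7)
The forward direction was already sketched in the introduction and is a routine verification from the t-structure axioms: if $(U^{\perp_{>0}}, U^{\perp_{\leq 0}})$ is a t-structure, then (S1) follows from $U \in {^{\perp_0}(U^{\perp_{\leq 0}})} = U^{\perp_{>0}}$, (S2) holds because any aisle is closed under existing coproducts, and (S3) is immediate from $U^{\perp_{\Z}} = U^{\perp_{>0}} \cap U^{\perp_{\leq 0}} = 0$.

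For the substantial converse, the plan is to invoke Theorem \ref{thm:main-silting} with the natural choice $\A := U^{\perp_{>0}}$ and $n := 1$. First I would promote (S1) plus (S2) to (S1.5): they force $U^{(I)} \in U^{\perp_{>0}}$ for every set $I$, and since $\Hom_\D(U, -[p])$ is an additive functor, vanishing on a direct sum forces vanishing on each summand, so $U^{\perp_{>0}}$ is automatically closed under direct summands. Hence $\Add(U) \subseteq U^{\perp_{>0}}$, which is (S1.5).

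Next, I would verify that $\A = U^{\perp_{>0}}$ is itself a cocomplete pre-aisle: closure under direct sums is (S2); closure under summands is as just noted; closure under the shift $[1]$ is the trivial index shift (from $\Hom_\D(U, X[p]) = 0$ for all $p > 0$, we read off $\Hom_\D(U, X[1][p]) = \Hom_\D(U, X[p+1]) = 0$ for all $p > 0$); and closure under extensions follows from the long exact Hom sequence. Now $U \in \A$ is precisely (S1), and $\Hom_\D(U, \A[1]) = 0$ holds by the definition of $\A$. All hypotheses of Theorem \ref{thm:main-silting} are met, and the conclusion that $U$ is silting is immediate.

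There is essentially no obstacle: Theorem \ref{thm:main-silting} has done the real work, and the corollary is a packaging argument. The only (small) insight is the recognition that no external pre-aisle needs to be manufactured — under (S1) and (S2), the class $U^{\perp_{>0}}$ is itself a cocomplete pre-aisle containing $U$, and the shift by $1$ automatically makes $\Hom_\D(U, \A[1]) = 0$, which is exactly the configuration Theorem \ref{thm:main-silting} was built to handle.
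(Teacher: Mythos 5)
Your proposal is correct and takes essentially the same approach as the paper: both instantiate Theorem \ref{thm:main-silting} with $\A=U^{\perp_{>0}}$ and $n=1$, using (S1) and (S2) to see that this class is a cocomplete pre-aisle containing $U$, and the index-shift $\Hom_\D(U,\A[1])=0$ to satisfy the vanishing hypothesis. The paper states this in one line; you have merely written out the routine verifications (closure under shift, extensions, summands, and the derivation of (S1.5)) that the paper leaves implicit.
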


\begin{proof}
In Theorem \ref{thm:main-silting}, we take $\A=U^{\perp_{>0}}$. Since $\Hom_{\D}(U,U^{\perp_{>0}}[1])=0$, we have the conclusion.
\end{proof}

As we already observed, all the above proofs can be dualized to obtain similar results for cosilting objects in triangulated categories with products. For the reader's convenience, we state the main dual results. Here $\Prod(U)$ denoted the class of all direct summand in direct products of copies of $U$.

\begin{thm}\label{thm:main-cosilting}
	Let $\D$ be a triangulated category with products. Suppose that $U\in \D$ is an object such that: 
	\begin{itemize}
		\item[(C1.5)] $\Prod (U)\subseteq {^{\perp_{>0}}U}$, and
		\item[(C3)] ${^{\perp_{\Z}}U}=0$.
	\end{itemize}
	Assume that there exists a complete pre-coaisle $\B$ {\rm (i.e., $\B$ is closed under extensions, direct products, direct summands and negative shifts)} in $\D$ with the following properties: 
	\begin{enumerate}[{\rm (1)}]
		\item $U\in \B$, and
		\item there exists a positive integer $n$ such that $\Hom_\D(\B[-n],U)=0$.
	\end{enumerate}
	
	Then $U$ is a cosilting object.
\end{thm}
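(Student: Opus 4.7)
The strategy is to dualize the proof of Theorem~\ref{thm:main-silting} throughout: swap coproducts for products, $\Add(U[k])$-precovers for $\Prod(U[-k])$-preenvelopes, homotopy colimits for homotopy limits, positive shifts for negative shifts, and the pre-aisle $\A$ for the pre-coaisle $\B$. First I dualize Construction~\ref{constr-sir}: set $X_0 = X$ and, inductively, take a $\Prod(U[-k])$-preenvelope $X_k \to U[-k]^{I_k}$ (with $I_k = \Hom_\D(X_k, U[-k])$); complete to a triangle $X_{k+1} \overset{g_k}{\to} X_k \to U[-k]^{I_k} \to X_{k+1}[1]$; write $g_{ki} : X_i \to X_k$ for the composition $g_k g_{k+1} \cdots g_{i-1}$ and $S_{ki}$ for its cone. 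The dual of Lemma~\ref{lem:basic-inductive} then reads: (a) $\Hom_\D(X_{k+1}, U[-j]) = 0$ for $j = 0, \ldots, k$ (induction on $k$, using the preenvelope property and (C1.5)), and (b) $S_{ki} \in \Prod(U[-(i-1)]) \ast \cdots \ast \Prod(U[-k])$ for $k \leq i$, by iterated octahedron.

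For each $k$, I build the dual $3 \times 3$ diagram, with the ``odd'' column now on the \emph{left} and products on both coordinate columns: its rows are the homotopy-limit-defining triangles for $Z$ (the homotopy limit of $(X_i)_{i \geq k}$, independent of $k$), $X_k$ (holim of constants), and $C_k$ (holim of $(S_{ki})_{i \geq k}$), while the middle column is the product of the triangles $X_i \to X_k \to S_{ki} \to X_i[1]$. The left column is then a distinguished triangle $Z \to X_k \to C_k \to Z[1]$.

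The dual of Claim~\ref{claim1} is: for $k > n$ and $0 \leq \ell \leq k - n$, $\Hom_\D(C_k, U[-\ell]) = 0$. Indeed, $\Prod(U[-j]) \subseteq \B[-j] \subseteq \B[-k]$ for $j \geq k \geq n$, so part (b) above gives $S_{ki} \in \B[-k]$; hence $S_{ki}[\ell] \in \B[-k+\ell] \subseteq \B[-n]$ for $\ell \leq k - n$, whence $\Hom_\D(S_{ki}, U[-\ell]) = 0$; a long exact sequence on the row triangle for $C_k$ yields the claim. The dual of Claim~\ref{claim2} has two parts. First, since $\Prod(U[-j]) \subseteq {}^{\perp_{>0}}U$ for every $j \geq 0$ (by (C1.5) and the closure of ${}^{\perp_{>0}}U$ under negative shifts, extensions, and products) and $S_{0i} \in \Prod(U[-(i-1)]) \ast \cdots \ast \Prod(U[0])$, one has $\prod_i S_{0i} \in {}^{\perp_{>0}}U$; the row triangle for $C_0$ then yields $C_0 \in {}^{\perp_{>0}}U$. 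Second, because $\Hom_\D(W, U[-j]) = 0$ for $W \in {}^{\perp_{\leq 0}}U$ and $j \geq 0$, the class $({}^{\perp_{\leq 0}}U)^{\perp_0}$ contains each $\Prod(U[-j])$, is closed under products and extensions, and therefore contains $\prod_i S_{0i}$ and $\prod_i S_{0i}[-1]$; the same long exact sequence gives $C_0 \in ({}^{\perp_{\leq 0}}U)^{\perp_0}$.

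Setting $B := C_0$, combining the dual of Lemma~\ref{lem:basic-inductive}(a), the dual of Claim~\ref{claim1}, and the column triangle $Z \to X_k \to C_k$ for $k$ chosen large relative to any given $\ell$ yields $Z \in {}^{\perp_{\leq 0}}U$; thus for every $X \in \D$ one obtains a triangle $Z \to X \to B \to Z[1]$ with $Z \in {}^{\perp_{\leq 0}}U$, $B \in {}^{\perp_{>0}}U$, and $\Hom_\D({}^{\perp_{\leq 0}}U, B) = 0$. The symmetric (dual) form of Lemma~\ref{lem:t-str}---valid because ${}^{\perp_{>0}}U$ is closed under extensions---combined with (C3), which gives ${}^{\perp_{\leq 0}}U \cap {}^{\perp_{>0}}U = {}^{\perp_\Z}U = 0$, yields that $({}^{\perp_{\leq 0}}U, {}^{\perp_{>0}}U)$ is a $t$-structure, i.e.\ $U$ is cosilting. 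The main delicate point in the dualization is orienting the $3 \times 3$ diagram so that $B = C_0$ rather than a positive shift of $C_0$: this is essential because $\Prod(U[1])$ need not lie in ${}^{\perp_{>0}}U$, whereas the shifts of $U$ appearing in $S_{0i}$ are all non-positive, so that both $S_{0i}$ and $S_{0i}[-1]$ remain within ${}^{\perp_{>0}}U$ and $({}^{\perp_{\leq 0}}U)^{\perp_0}$ respectively.
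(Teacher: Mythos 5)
Your overall strategy — a straight dualization of the proof of Theorem~\ref{thm:main-silting}, with $\Prod(U[-k])$-preenvelopes, homotopy limits, and the key observation that the $3\times 3$ diagram must be oriented so the coaisle piece is $C_0$ itself (not a shift) — is exactly the dualization the paper has in mind, and the dual of Claim~\ref{claim1} and the second half of the dual of Claim~\ref{claim2} (concerning $({}^{\perp_{\leq 0}}U)^{\perp_0}$, which \emph{is} closed under products because $\Hom_\D(W,\prod Y_i)\cong\prod\Hom_\D(W,Y_i)$) are carried out correctly.

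There is, however, a gap in the first half of your dual of Claim~\ref{claim2}. You assert that $\prod_i S_{0i}\in{}^{\perp_{>0}}U$ because each $S_{0i}\in\Prod(U[-(i-1)])\ast\cdots\ast\Prod(U[0])\subseteq{}^{\perp_{>0}}U$, invoking in passing ``the closure of ${}^{\perp_{>0}}U$ under \dots products.'' But closure of ${}^{\perp_{>0}}U$ under arbitrary products is \emph{not} a hypothesis of Theorem~\ref{thm:main-cosilting} (that would be the stronger (C2), and with it one could simply take $\B={}^{\perp_{>0}}U$). The hypothesis (C1.5) only places $\Prod(U)$ inside ${}^{\perp_{>0}}U$, and since $\Hom_\D(\prod_i X_i, U[p])$ does not decompose as a product of the $\Hom_\D(X_i,U[p])$, knowing $S_{0i}\in{}^{\perp_{>0}}U$ for each $i$ does not yield $\prod_i S_{0i}\in{}^{\perp_{>0}}U$. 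Nor can you stay inside a single $\ast$-composition of $\Prod$-classes, because the composition containing $S_{0i}$ lengthens with $i$. The fix is precisely the dual of what the paper does in Claim~\ref{claim2}: split the product at index $n$. The finite piece $\prod_{0\le i<n}S_{0i}$ is a finite direct sum, and each $S_{0i}$ with $i\le n$ lies in the fixed class $\Prod(U[-(n-1)])\ast\cdots\ast\Prod(U[0])\subseteq{}^{\perp_{>0}}U$; for the tail use the triangle $S_{ni}\to S_{0i}\to S_{0n}\to S_{ni}[1]$, so that $\prod_{i\ge n}S_{0i}$ is an extension of $\prod_{i\ge n}S_{ni}$ (which lies in $\B[-n]$, hence in ${}^{\perp_{>0}}U$ by the shift argument you already used in Claim~\ref{claim1}) by $\prod_{i\ge n}S_{0n}$ (which lies in the single, product-closed class $\Prod(U[-(n-1)])\ast\cdots\ast\Prod(U[0])$). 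With this insertion your argument is complete and matches the intended dualization.
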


\begin{cor}\label{cor:car-cosilt}
	Assume the $\D$ is a triangulated category with coproducts. An object $U\in\D$ is cosilting if and only if the following conditions are true:
	\begin{itemize}
		\item[(C1)] $U\in { ^{\perp_{>0}}U}$,
		\item[(C2)] $^{\perp_{>0}}U$ is closed under direct products,
		\item[(C3)] $^{\perp_\Z}U=0$.
	\end{itemize}
\end{cor}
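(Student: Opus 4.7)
The plan is to mirror the proof of Corollary~\ref{cor:car-silt}, reducing the biconditional to Theorem~\ref{thm:main-cosilting} applied with $\B = {^{\perp_{>0}}U}$.

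For the ``only if'' direction, suppose $U$ is cosilting, so $({^{\perp_{\leq 0}}U}, {^{\perp_{>0}}U})$ is a t-structure. Property (C1) follows from a retract argument: decomposing $U$ itself via the t-structure gives a triangle $W \to U \to V \to W[1]$ with $V \in {^{\perp_{>0}}U}$, and applying $\Hom(-, U)$ shows that $\Hom(V, U) \to \Hom(U, U)$ is surjective (since $\Hom(W, U) = 0$ by $W \in {^{\perp_{\leq 0}}U}$), so that $U$ is a direct summand of $V$ and therefore lies in ${^{\perp_{>0}}U}$. Property (C2) is automatic, since the co-aisle $\V$ of any t-structure equals $\U^{\perp_0}$, which is closed under all existing products. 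For (C3), any $X \in {^{\perp_\Z}U}$ lies in ${^{\perp_{\leq 0}}U} \cap {^{\perp_{>0}}U}$, so t-structure orthogonality forces $\Hom(X, X) = 0$, whence $X = 0$.

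For the ``if'' direction, I would set $\B = {^{\perp_{>0}}U}$ and verify the hypotheses of Theorem~\ref{thm:main-cosilting}. That $\B$ is a complete pre-coaisle is a short check: closure under extensions is the standard long exact $\Hom$-sequence argument, closure under products is (C2), closure under summands is immediate from the Hom-vanishing definition, and closure under negative shifts follows from $\Hom(X[-1], U[p]) = \Hom(X, U[p+1])$ together with $p + 1 > 0$ whenever $p > 0$. Condition (1) of the theorem, $U \in \B$, is (C1). Condition (2) holds with $n = 1$, since $\Hom_\D(\B[-1], U) = \Hom_\D(\B, U[1]) = 0$ directly by the definition of $\B$. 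The remaining hypothesis (C1.5), namely $\Prod(U) \subseteq \B$, follows from (C1) together with closure of $\B$ under products and summands. Theorem~\ref{thm:main-cosilting} then delivers that $U$ is cosilting.

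There is no substantive obstacle in the argument, since the content has been absorbed into Theorem~\ref{thm:main-cosilting}. The only point requiring a moment of care is confirming that $\B = {^{\perp_{>0}}U}$ really satisfies all four closure axioms of a complete pre-coaisle, but each is essentially automatic once (C2) is available.
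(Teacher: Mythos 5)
Your proposal is correct and mirrors exactly the paper's proof strategy for Corollary~\ref{cor:car-silt}: take $\B = {^{\perp_{>0}}U}$, observe it is a complete pre-coaisle, and apply Theorem~\ref{thm:main-cosilting} with $n=1$ since $\Hom_\D(\B, U[1]) = 0$ by definition. The paper treats the forward direction as ``easy to see'' in the introduction without spelling it out, whereas you supply the retract argument for (C1) and the orthogonality arguments for (C2)--(C3) explicitly; both of these are the standard verifications and present no divergence from the intended route.
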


\section{Intermediate silting objects}\label{intermediate}

In this section we will assume that $\D$ has coproducts and that $V$ is a silting object. Let $n$ be a positive integer, we will say that a silting object $U\in \D$ is \textit{$n$-$V$-intermediate} if $$V[n]\in U^{\perp_{>0}} \textrm{ and }U\in V^{\perp_{>0}}.$$
For instance, the bounded silting complexes of $R$-modules that are studied in \cite{AMV:2015} and \cite{Wei-isr} coincide with the $n$-$R$-intermediate silting objects from the (unbounded) derived category $\mathbf{D}(R)$. They also appear in \cite[Theorem 2]{NSZ}. In Theorem \ref{thm:inter-n-silt} we will provide a characterization that extends the characterizations of tilting modules and bounded silting complexes presented in \cite{Wei-isr} and \cite{Ba}. The proofs can be dualized to extend the similar results proved for cotilting modules and cosilting complexes in \cite{Ba} and \cite{ZW2}.

If $\C$ is a class of objects in $\D$,  we say that  $X\in \D$ has the {\em $\C$-dimension } (respectively {\em $\C$-codimension }) {\textit at most} $n$, and we write
$\dim_\C X\leq n$, ($\codim_\C X\leq n$)  provided that there  is a sequence of triangles
\[ X_{i+1}\to C_i\to X_i\to X_{i+1}[1]\hbox{ with }0\leq i< n \]
\[\hbox{(respectively } X_i\to C_i\to X_{i+1}\to X_{i}[1]\hbox{ with }0\leq i< n\hbox{)}\] in $\D$, such that
$C_i\in\C$, $X_0=X$ and $X_{n+1}\in \C$. We will write $\dim_\C X< \infty$ ($\codim_\C X<\infty$) if we can find a positive integer $n$ such that $\dim_\C X\leq n$ (respectively, $\codim_\C X\leq n$).

\begin{lemma} \cite[Lemma 3.8]{An-19} \label{lem:dim-codim}
	Suppose that $\C$ is a class in a triangulated category $\D$, and $X$ is an object from $\D$. Then
	\begin{enumerate}[{\rm i)}]
		\item $\dim_\C X\leq n$ if and only if $X\in \C\ast \C[1]\ast \dots \ast \C[n]$;
		\item $\codim_\C X\leq n$ if and only if $X\in \C[-n]\ast \C[-n+1]\ast \dots \ast \C$, if and only if $\dim_\C X[n]\leq n$;
		\item $\dim_\C X\leq n$ if and only if $\codim_{\C[n]}X\leq n$.
	\end{enumerate}
\end{lemma}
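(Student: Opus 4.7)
The plan is to derive all three equivalences directly from the definitions via induction on $n$ and the associativity of $\ast$ recorded in Lemma \ref{lem:compuneri} 1); no further triangulated-category machinery should be required.

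For (i), in the forward direction I would rotate each defining triangle $X_{i+1}\to C_i\to X_i\to X_{i+1}[1]$ one step to $C_i \to X_i \to X_{i+1}[1] \to C_i[1]$, which records the containment $X_i \in C_i \ast X_{i+1}[1]$. Starting from the terminal object of the chain (which lies in $\C$) and feeding this information back through $i = n-1, n-2, \dots, 0$, repeated use of associativity yields $X = X_0 \in \C \ast \C[1] \ast \cdots \ast \C[n]$. Conversely, writing the membership as $X \in \C \ast \bigl(\C[1] \ast \cdots \ast \C[n]\bigr)$ produces a triangle $C_0 \to X \to Y_1 \to C_0[1]$ with $C_0 \in \C$ and $Y_1 \in \C[1] \ast \cdots \ast \C[n]$; setting $X_1 := Y_1[-1]$ and rotating gives the first required triangle, and recursion on the decomposition of $X_1$ produces the remaining triangles together with the terminal object in $\C$.

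For (ii), I would mirror the argument of (i) with the arrows reversed: from $X_i \to C_i \to X_{i+1} \to X_i[1]$ one rotates once to $X_{i+1}[-1] \to X_i \to C_i \to X_{i+1}$, so $X_i \in X_{i+1}[-1] \ast C_i$, and iterating gives $X \in \C[-n] \ast \C[-n+1] \ast \cdots \ast \C$. The second equivalence in (ii), namely $\codim_\C X \leq n$ iff $\dim_\C X[n] \leq n$, is then formal: shifting the expression $\C[-n] \ast \cdots \ast \C$ by $[n]$ produces exactly the right-hand side of (i). Finally, (iii) follows by direct comparison, since $\codim_{\C[n]} X \leq n$ amounts to $X \in (\C[n])[-n] \ast \cdots \ast \C[n] = \C \ast \C[1] \ast \cdots \ast \C[n]$, which is the characterization of $\dim_\C X \leq n$ supplied by (i).

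The main obstacle is not conceptual but purely notational: one has to be careful with off-by-one errors when rotating triangles and shifting, because the indices in the two descriptions of the filtration do not line up verbatim. Once the bookkeeping is nailed down, every step reduces to a rotation of a triangle followed by an application of associativity.
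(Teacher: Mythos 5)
The paper does not give a proof of this lemma at all --- it is stated with a citation to \cite[Lemma~3.8]{An-19} --- so there is no ``paper's approach'' to compare against. Your argument is the standard one, and it is correct: rotating each defining triangle $X_{i+1}\to C_i\to X_i\to X_{i+1}[1]$ gives $X_i\in C_i\ast X_{i+1}[1]$, and feeding this backwards from the terminal object together with associativity of $\ast$ and the compatibility $(\X\ast\Y)[k]=\X[k]\ast\Y[k]$ yields $X\in\C\ast\C[1]\ast\cdots\ast\C[n]$; the converse inverts this by peeling one factor off the left and recursing; (ii) is the mirror image, and the second and third equivalences are just the observation that shifting by $[n]$ carries $\C[-n]\ast\cdots\ast\C$ onto $\C\ast\cdots\ast\C[n]$. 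One small remark worth flagging: the definition of $\C$-dimension as printed in the paper ends with ``$X_{n+1}\in\C$'', but with triangles indexed by $0\le i<n$ only $X_0,\dots,X_n$ are defined, and the count of factors in (i) forces the terminal condition to be $X_n\in\C$; your phrase ``the terminal object of the chain'' implicitly makes this correction, which is the right reading.
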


\begin{rem}
	Since the direct sums preserve the exactness of the triangles in $\D$, if we assume that $\C$ is closed under (finite) direct sums the the class of objects of $\C$-(co)dimension at most $n$ is closed under  (finite) direct sums. Moreover, if $\C=\add \C$ and $\Hom_{\D}(\C,\C[1])=0$ we can use Lemma \ref{lem:compuneri} to observe that the class of objects of $\C$-(co)dimension at most $n$ is closed under direct summands.  
\end{rem}

\begin{lemma}\label{lem:dim-codim-perp}
	Let $U$ and $V$ be objects from $\D$. 
	\begin{enumerate}[{\rm (i)}]
		\item If $\codim_{\Add(U)}V\leq n$ then $U^{\perp_{>0}}\subseteq V^{\perp_{>0}}$.
		\item If $\dim_{\Add V}U\leq n$ then $V^{\perp_{>0}}[n]\subseteq U^{\perp_{>0}}$ (or, equivalently,  $V^{\perp_{>0}}\subseteq U^{\perp_{>n}}$). 
	\end{enumerate}
\end{lemma}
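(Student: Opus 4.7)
The plan is to base both parts on a single principle: for a fixed $Y\in\D$ and integer $n_0$, the class $\{X\in\D\mid \Hom_\D(X,Y[p])=0\text{ for all }p>n_0\}$ is closed under direct sums, direct summands, shifts $[-k]$ for $k\geq 0$, and---via the long exact sequence obtained from $\Hom_\D(-,Y[p])$ applied to a triangle---under extensions, so it is closed under the operation $\ast$. It therefore suffices to verify the orthogonality layer by layer in the $\ast$-decomposition supplied by Lemma \ref{lem:dim-codim}.

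For (i), Lemma \ref{lem:dim-codim}(ii) rewrites $\codim_{\Add(U)}V\leq n$ as $V\in \Add(U)[-n]\ast\dots\ast\Add(U)$. Fix $Y\in U^{\perp_{>0}}$. For any $W\in\Add(U)$ and any $p>0$, $\Hom_\D(W,Y[p])$ is a direct summand of a product of copies of $\Hom_\D(U,Y[p])=0$ (using $\Hom_\D(U^{(I)},-)\cong\prod_I\Hom_\D(U,-)$), and a further shift by $-k$ with $0\leq k\leq n$ merely replaces $p$ by $p+k>0$. Hence each layer $\Add(U)[-k]$ lies in $\{X\mid \Hom_\D(X,Y[p])=0,\ p>0\}$, and by the closure of this class under $\ast$ we get $V$ there as well, that is, $Y\in V^{\perp_{>0}}$.

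For (ii), Lemma \ref{lem:dim-codim}(i) gives $U\in\Add(V)\ast\Add(V)[1]\ast\dots\ast\Add(V)[n]$. Fix $Z\in V^{\perp_{>0}}$. For $W\in\Add(V)$, $0\leq k\leq n$ and $q>n$, $\Hom_\D(W[k],Z[q])=\Hom_\D(W,Z[q-k])$ vanishes because $q-k>0$. So each layer $\Add(V)[k]$ sits inside $\{X\mid \Hom_\D(X,Z[q])=0,\ q>n\}$, and the $\ast$-closure of this class pulls $U$ in, giving $Z\in U^{\perp_{>n}}$. Unwinding definitions, $V^{\perp_{>0}}\subseteq U^{\perp_{>n}}$ is literally the same statement as $V^{\perp_{>0}}[n]\subseteq U^{\perp_{>0}}$.

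There is no real obstacle; the argument is essentially bookkeeping. The only points requiring care are tracking which direction each shift goes (the $-k$ in (i) versus the $+k$ in (ii), matching the shape of the codimension versus dimension decompositions) and using $\Hom_\D(U^{(I)},-)\cong\prod_I\Hom_\D(U,-)$ to pass from information about $U$ to information about every $W\in\Add(U)$.
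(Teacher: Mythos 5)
Your proof is correct and is essentially the paper's argument in slightly more abstract packaging: the paper writes out the defining triangles of the (co)dimension filtration and inducts through the resulting long exact sequences, which is exactly what your observation that the class $\{X\mid\Hom_\D(X,Y[p])=0\text{ for }p>n_0\}$ is closed under $\ast$, direct sums/summands, and negative shifts encodes, once the filtration is rewritten via Lemma~\ref{lem:dim-codim}. Your uniform treatment of (i) and (ii) from one closure principle is a small stylistic gain; the paper instead proves (i) by explicit induction and disposes of (ii) by the remark ``same way, or use (i) together with Lemma~\ref{lem:dim-codim}.''
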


\begin{proof}
	(i) We have the triangles: 
	\begin{align*}
		& V\to U_0\to K_1\to V[1], \\
		& K_1\to U_1\to K_2\to K_1[1], \\ 
		& \dots \\
		& K_{n-1}\to U_{n-1}\to K_n\to K_{n-1}
	\end{align*}
	such that $U_0,\dots,U_{n-1},K_n\in \Add(U)$. 
	
	If $X\in U^{\perp_{>0}}$, we look at the long exact sequences 
	\begin{align*}
		 \Hom_{\D}(K_n,X[1])\to &\Hom_{\D}(U_{n-1},X[1])\to  \Hom_{\D}(K_{n-1},X[1])\to \\  &\Hom_{\D}(K_n,X[2])\to \Hom_{\D}(U_{n-1},X[2])\to \dots , \\
		\dots & \\
		\Hom_{\D}(K_2,X[1])\to & \Hom_{\D}(U_{1},X[1])\to \Hom_{\D}(K_{1},X[1])\to  \\ & \Hom_{\D}(K_2,X[2])\to \Hom_{\D}(U_{1},X[2])\to \dots , \\
		 \Hom_{\D}(K_1,X[1])\to & \Hom_{\D}(U_{0},X[1])\to \Hom_{\D}(V,X[1])\to  \\  & \Hom_{\D}(K_1,X[2])\to  \Hom_{\D}(U_{0},X[2])\to \dots , \\
	\end{align*}
	to conclude inductively that $X\in K_{i}^{\perp_>0}$ for all $0\leq i<n$, where $K_0=V$. 
	
	(ii) This can be proved in the same way, or we can use (i) together with Lemma \ref{lem:dim-codim}. 
\end{proof}

We say that an object $X$ is \textit{$n$-$V$-presented by $U$} if for every $0\leq i< n$ there exists a triangle 
$N_{i+1}\to U_i\to N_i\to N_{i+1}[1]$ such that $N_0=X$, for all $i>0$ we have $N_i\in V^{\perp_{>0}}$, and all $U_i$ are from $\Add(U)$. We denote by $\mathrm{Pres}_{V}^n(U)$ the class of all objects from $\D$ that are $n$-$V$-presented by $U$. If $U\in V^{\perp_{>0}}$,  it is enough to verify that $N_{n}\in V^{\perp_{>0}}$ to conclude that $N_i\in V^{\perp_{>0}}$ for all $i=\overline{1,n}$.

\begin{thm}\label{thm:inter-n-silt}
	Suppose that $V$ is silting. The following are equivalent for an object $U\in \D$ and a positive integer $n$:
	\begin{enumerate}[{\rm a)}]
		\item $U$ is an $n$-$V$-intermediate silting object;\\
		\item \begin{itemize}
			\item[(I1)] $U^{\perp_{>0}}\subseteq V^{\perp_{>0}}$,
			\item[(I2)] $V^{\perp_{>0}}[n]\subseteq U^{\perp_{>0}}$,
			\item[(S1.5)] $\Add(U)\subseteq U^{\perp_{>0}}$;\\
		\end{itemize} 
	\item \begin{itemize}
			\item[(I2)] $V^{\perp_{>0}}[n]\subseteq U^{\perp_{>0}}$,
			\item[(I3)] $U\in V^{\perp_{>0}}$,
		\item[(S1.5)] $\Add(U)\subseteq U^{\perp_{>0}}$,
			\item[(S3)] $U^{\perp_\Z}=0$;\\
	\end{itemize} 
		\item \begin{itemize}
			\item[(I3)] $U\in V^{\perp_{>0}}$, 
			\item[(I4)] $\mathrm{Pres}_{V}^n(U)=U^{\perp_{>0}}$;\\	
		\end{itemize}
	\item  \begin{itemize}
		\item[(I5)] $\dim_{\Add V}U\leq n$,
		\item[(I6)] $\codim_{\Add(U)}V\leq n$,
		\item[(S1.5)] $\Add(U)\subseteq U^{\perp_{>0}}$.\\
			\end{itemize}
	\end{enumerate}	
Under these conditions, $U$ is a silting object.
\end{thm}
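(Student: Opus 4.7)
The plan is to establish the cycle (a) $\Rightarrow$ (e) $\Rightarrow$ (b) $\Rightarrow$ (c) $\Rightarrow$ (a) and to obtain (a) $\Leftrightarrow$ (d) separately. Three of these links are routine. (e) $\Rightarrow$ (b) is an immediate application of Lemma \ref{lem:dim-codim-perp}: (I6) yields (I1), (I5) yields (I2), and (S1.5) is preserved. For (b) $\Rightarrow$ (c), (I2) and (S1.5) are retained; (I3) is the chain $U \in \Add(U) \subseteq U^{\perp_{>0}} \subseteq V^{\perp_{>0}}$ using (S1.5) and (I1); and (S3) follows because $X \in U^{\perp_\Z}$ forces every shift $X[m]$ into $U^{\perp_\Z} \subseteq U^{\perp_{>0}} \subseteq V^{\perp_{>0}}$ by (I1), whence $\Hom_\D(V, X[k]) = 0$ for all $k \in \Z$ and the silting property of $V$ gives $X \in V^{\perp_\Z} = 0$. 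For (c) $\Rightarrow$ (a), apply Theorem \ref{thm:main-silting} with $\A := V^{\perp_{>0}}$, a cocomplete pre-aisle as the aisle of the silting $t$-structure attached to $V$: (I3) places $U \in \A$, (I2) yields $\Hom_\D(U, \A[n+1]) = 0$ (for $X \in \A$, $X[n] \in U^{\perp_{>0}}$ gives $\Hom_\D(U, X[n+1]) = 0$), and (S1.5), (S3) are given; intermediacy then comes from (I3) together with $V[n] \in V^{\perp_{>0}}[n] \subseteq U^{\perp_{>0}}$ via (I2).

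The heart of the proof is (a) $\Rightarrow$ (e). Given that $U$ is $n$-$V$-intermediate silting, (S1.5) is built into silting. For (I5), form the $\Add(V)$-precover tower $N_{k+1} \to V^{(I_k)} \to N_k \to N_{k+1}[1]$ starting from $N_0 = U$; the surjectivity of $\Hom_\D(V, V^{(I_k)}) \to \Hom_\D(V, N_k)$ intrinsic to precovers places every syzygy $N_k$ in $V^{\perp_{>0}}$, and iterated composition via Lemma \ref{lem:compuneri}(2) produces the decomposition $U \in V^{(I_0)} \ast V^{(I_1)}[1] \ast \dots \ast V^{(I_{n-1})}[n-1] \ast N_n[n]$. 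The task then reduces to identifying $N_n \in \Add(V)$: the hypothesis $V[n] \in U^{\perp_{>0}}$ combined with $U$ silting is used via dimension-shifting long exact sequences along the tower, with Lemma \ref{lem:compuneri}(4) absorbing the summand identification. Dually, iterate $\Add(U)$-precovers starting from $V[n] \in U^{\perp_{>0}}$, using $U \in V^{\perp_{>0}}$ to force termination after $n$ steps, yielding (I6).

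For (a) $\Leftrightarrow$ (d), the inclusion $\mathrm{Pres}_V^n(U) \subseteq U^{\perp_{>0}}$ is $n$-fold dimension shifting along an $n$-$V$-presentation, using (S1.5) on the middle terms and (I2) to annihilate the $n$-th syzygy. For the reverse inclusion, given $X \in U^{\perp_{>0}} \subseteq V^{\perp_{>0}}$ (via (I1)), iterate $\Add(U)$-precovers enlarged at each step using the factorization of maps $V \to N_i$ through $\Add(U)$-shifts (afforded by (I6) because $\Hom_\D(U[k], -)$ vanishes on $U^{\perp_{>0}}$ for $k \leq 0$) to keep syzygies in $V^{\perp_{>0}}$; termination at step $n$ is forced by the dimension bound. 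Conversely, (d) $\Rightarrow$ (c) is quick: for every $N \in V^{\perp_{>0}}$, the tautological presentation of $N[n]$ with $U_i = 0$ and $N_i = N[n-i]$ shows $N[n] \in \mathrm{Pres}_V^n(U)$, and (I4) then gives $N[n] \in U^{\perp_{>0}}$, i.e., (I2); (I3) is given, and (S1.5), (S3) are extracted exactly as in (b) $\Rightarrow$ (c). The main obstacle throughout is the termination in (a) $\Rightarrow$ (e), i.e.\ the identification $N_n \in \Add(V)$: this step requires coordinating both silting $t$-structures via the intermediacy hypotheses and cannot be extracted from either silting structure in isolation.
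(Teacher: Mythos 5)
Your scheme (a)$\Rightarrow$(e)$\Rightarrow$(b)$\Rightarrow$(c)$\Rightarrow$(a), together with (a)$\Leftrightarrow$(d), is structurally sound and differs from the paper's (which goes a)$\Rightarrow$b)$\Rightarrow$c)$\Rightarrow$d)$\Rightarrow$e) and back). The implications (e)$\Rightarrow$(b), (b)$\Rightarrow$(c), (c)$\Rightarrow$(a), and (d)$\Rightarrow$(c) match the paper's arguments. However, there is a genuine gap exactly where you flag the ``main obstacle'': the identification $N_n\in\Add(V)$ in (a)$\Rightarrow$(e). Lemma~\ref{lem:compuneri}(4) is not the right tool --- it only asserts that the $\ast$-product $\Add(V)\ast\cdots\ast\Add(V)[n]$ is closed under direct summands; it gives no criterion for a single object to lie in $\Add(V)$. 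The paper sidesteps this by first establishing a)$\Leftrightarrow$d), and then in d)$\Rightarrow$e) it verifies $N_n\in U^{\perp_{>0}}$ together with $\Hom_\D(N_n[-1],U^{\perp_{>0}})=0$ and invokes \cite[Proposition 4.8]{An-19}.

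Your direct route can actually be repaired without that citation. The dimension shift along your tower $N_{k+1}\to V^{(I_k)}\to N_k\to N_{k+1}[1]$ (using $\Hom_\D(V^{(I_k)}[-p],X)=0$ for $p\geq 1$ and $X\in V^{\perp_{>0}}$) gives $\Hom_\D(N_n[-1],X)\cong\Hom_\D(N_0[-n-1],X)=\Hom_\D(U,X[n+1])$, which vanishes for all $X\in V^{\perp_{>0}}$ by (I2). Taking one more $\Add(V)$-precover triangle $N_{n+1}\to V^{(I_n)}\to N_n\to N_{n+1}[1]$ and setting $X=N_{n+1}\in V^{\perp_{>0}}$ shows the connecting map $N_n\to N_{n+1}[1]$ is zero, so the triangle splits and $N_n$ is a direct summand of $V^{(I_n)}$, hence $N_n\in\Add(V)$. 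The dual argument for (I6) needs the same repair. Two smaller points: in your (a)$\Rightarrow$(d) the parenthetical ``$\Hom_\D(U[k],-)$ vanishes on $U^{\perp_{>0}}$ for $k\leq 0$'' should read $k<0$, and the invocation of (I6) there is unnecessary --- the paper's c)$\Rightarrow$d) simply notes that the syzygy of an $\Add(U)$-precover of $X\in U^{\perp_{>0}}$ again lies in $U^{\perp_{>0}}\subseteq V^{\perp_{>0}}$, so $U^{\perp_{>0}}\subseteq\bigcap_k\mathrm{Pres}_U^k(U)\subseteq\mathrm{Pres}_V^n(U)$ with no termination argument needed.
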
 

\begin{proof}
a)$\Rightarrow$b) Since $U$ is a silting object, the condition (I3) is automatically satisfied. Moreover, $U^{\perp_{>0}}$ is the smallest cocomplete pre-aisle that contains $U$, hence (I1) is true. In a similar way, we use $V[n]\in U^{\perp_{>0}}$ to obtain (I2). 	

b)$\Rightarrow$c) We only have to prove (S3).
Let $X\in  U^{\perp_{\Z}}$. Then for all integers $i$ we have $X[i]\in  U^{\perp_{>0}}$. From (I1) we obtain that for all $i\in \Z$ we have $X[i]\in  V^{\perp_{>0}}$, hence $X\in  V^{\perp_{\Z}}=0$.  

c)$\Rightarrow$a) It is enough to prove that $U$ is a silting object, so we will apply Theorem \ref{thm:main-silting} for $\A=V^{\perp_{>0}}$.	
	
c)$\Rightarrow$d) 
For (I4), we remind that for every $X\in \D$ there is a triangle $$Y\to U^{(I)}\overset{f}\to X\to Y[1]$$ such that $f$ is an $\Add(U)$-precover for $X$.  
In particular, $\Hom(U,f)$ is surjective. If $X\in U^{\perp_{>0}}$, it follows by (I3) that $Y\in  U^{\perp_{>0}}$. Then $X\in \mathrm{Pres}_{U}^k(U)$ for all $k\geq 0$. 
From (I1), we obtain $\mathrm{Pres}_{U}^k(U)\subseteq \mathrm{Pres}_{V}^k(U)$ for all $k\geq 0$, hence $$\textstyle U^{\perp_{>0}}\subseteq \bigcap_{k>0}\mathrm{Pres}_{V}^k(U)\subseteq \mathrm{Pres}_{V}^n(U).$$ 
	
Conversely, let $X\in \mathrm{Pres}_{V}^n(U)$. We consider a family of triangles $$N_{i+1}\to U_i\to N_i,\ 0\leq i\leq n,$$ such that $N_0=X$, for all $i>0$ we have $N_i\in V^{\perp_{>0}}$, and all $U_i$ are from $\Add(U)$. From the condition (I2), it follows that $N_n\in U^{\perp_{>n}}$. Therefore, for every $k>0$ and every $0\leq i\leq n$ we have $$\Hom_\D(U,X[k])\cong \Hom_\D(U, N_i[i+k])\cong \Hom_\D(U,N_n[n+k])=0,$$ hence $X\in U^{\perp_{>0}}$.
	
d)$\Rightarrow$b) From $\Add(U)\subseteq \mathrm{Pres}_{V}^n(U)$, we obtain the condition (S1.5). Moreover, from (I3) it follows that $\Add(U)\subseteq V^{\perp_{>0}}$.

	
Let $X\in U^{\perp_{>0}}= \mathrm{Pres}_{V}^n(U)$. There exists a triangle $Y\to M\to X\to Y[1]$ such that $M\in \Add(U)\subseteq V^{\perp_{>0}}$ and $Y\in V^{\perp_{>0}}$. Since $V^{\perp_{>0}}$ is closed under positive shifts and extensions, we obtain $X\in V^{\perp_{>0}}$. It follows that $U^{\perp_{>0}}\subseteq V^{\perp_{>0}}$, hence the condition (I1) is satisfied. 
	
In order to prove (I2), we take an object $X\in V^{\perp_{>0}}$, and in the sequence of triangles $$X[k-1]\to 0\to X[k]\overset{=}\to X[k] ,\ 1\leq k< n,$$ we denote $X[k]=N_{n-k}$ and $X[k-1]=N_{n-k+1}$ to conclude that $X[n]\in \mathrm{Pres}_{V}^n(U)$. Then  $ V^{\perp_{>0}}[n]\subseteq U^{\perp_{>0}}$, and the proof is complete.
	%
	
d)$\Rightarrow$e) From the proof of c)$\Rightarrow$d), we know that $U^{\perp_{>0}}\subseteq \mathrm{Pres}_{U}^k(U)$ for all $k\geq 0$. Moreover, we know from (I2) that $V^{\perp_{>0}}[n]\subseteq U^{\perp_{>0}}$. Therefore $V[n]\in  U^{\perp_{>0}}\subseteq \mathrm{Pres}_{U}^n(U)$, and there exists a sequence of triangles $$N_{i+1}\to U_i\to N_i\to N_{i+1}[1],\ 0\leq i<n,$$ such that $N_0=V[n]$, for all $i>0$ we have $N_i\in U^{\perp_{>0}}$, and all $U_i$ are from $\Add(U)$. We will prove that $N_n\in \Add(U)$. 

Let $X\in U^{\perp_{>0}}$. Since $U^{\perp_{>0}}\subseteq V^{\perp_{>0}}$, we obtain $\Hom_\D(V[-1],X)=0$.  For every $p>0$ we have $\Hom_\D(U_i[-p],X)=0$, hence $$\Hom_\D(N_{i+1}[-p],X)\cong \Hom_\D(N_i[-p-1],X)$$ for all $0\leq i<n$. We obtain the sequence of isomorphisms \begin{align*}
\Hom_\D(N_{n}[-1],X)& \cong \Hom_\D(N_{n-1}[-2],X)\cong \dots \cong \Hom_\D(N_{0}[-n-1],X) \\ & =\Hom_\D(V[-1],X)=0.\end{align*}

It follows that $\Hom_\D(N_{n}[-1],X)=0$ for all $X\in U^{\perp_{>0}}$.
We also have $N_n\in U^{\perp_{>0}}$, so we can apply \cite[Proposition 4.8]{An-19} to conclude that $N_n\in \Add(U)$. Then $\dim_{\Add(U)}V[n]\leq n$, hence $\codim_{\Add(U)}V\leq n$. 

To prove (I6), it is enough to observe that $U$ is silting and $V[n]$ is an $n$-$U$-intermediate silting object. Therefore, we can apply what we just proved for the $n$-$V$-intermediate silting object $U$. 

e)$\Rightarrow$b) This can be obtained by applying Lemma \ref{lem:dim-codim-perp}.
\end{proof}

If $\C$ is class of objects in $\D$, we denote by $\thick(\C)$ the smallest subcategory of $\D$ that contains $\C$ and is closed under shifts, extensions and direct summands.

\begin{cor}\label{cor-shifted-silting}
	Suppose the $V$ is silting. The following are equivalent for an object $U$:
	\begin{enumerate}[{\rm (i)}]
		\item there exist integers $\ell$ and $n$, $n\geq 0$, such that $U[\ell]$ is a silting $n$-$V$-intermediate object;
		\item $\Add(U)\subseteq U^{\perp_{>0}}$, and $\thick{\Add(U)}=\thick{\Add (V)}$.
	\end{enumerate}	
\end{cor}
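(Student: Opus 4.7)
The direction (i) $\Rightarrow$ (ii) is immediate from Theorem \ref{thm:inter-n-silt}. If $U[\ell]$ is an $n$-$V$-intermediate silting object, then (S1.5) for $U[\ell]$ is equivalent to $\Add(U) \subseteq U^{\perp_{>0}}$. The characterization (e) of that theorem gives $\dim_{\Add V}(U[\ell]) \leq n$ and $\codim_{\Add(U[\ell])}(V) \leq n$, which by Lemma \ref{lem:dim-codim} place $U[\ell]$ in $\thick\Add(V)$ and $V$ in $\thick\Add(U[\ell]) = \thick\Add(U)$. Since thick subcategories are closed under shifts, this forces $\thick\Add(U) = \thick\Add(V)$.

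For (ii) $\Rightarrow$ (i), the core step is the following \emph{key lemma}: whenever $\Add(U) \subseteq U^{\perp_{>0}}$, the thick subcategory generated by $\Add(U)$ can be described as
\[ \thick\Add(U) \;=\; \bigcup_{a \leq b}\, \Add(U)[a] \ast \Add(U)[a+1] \ast \cdots \ast \Add(U)[b]. \]
The inclusion of the right-hand union $\mathcal{W}$ into $\thick\Add(U)$ is obvious. For the reverse inclusion, I would verify that $\mathcal{W}$ is closed under shifts (trivial), under direct summands (each finite interval $\Add(U)[a] \ast \cdots \ast \Add(U)[b]$ is closed under direct summands by Lemma \ref{lem:compuneri}(4) applied to $\X = \Add(U)[a]$, whose relevant positive-shift orthogonalities follow from (S1.5) via a shift), and under extensions. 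The extension closure is the delicate point. Writing $\X_k = \Add(U)[k]$, the vanishing $\Hom(\X_i, \X_j[1]) = \Hom(\Add(U), \Add(U)[j - i + 1]) = 0$ for $i \leq j$, which follows from (S1.5), forces every triangle realizing an element of $\X_j \ast \X_i$ to split, so $\X_j \ast \X_i \subseteq \X_i + \X_j \subseteq \X_i \ast \X_j$. A bubble-sort argument (combined with $\X_j \ast \X_j = \X_j$ and with padding by the zero object) then shows that any iterated product $\X_{i_1} \ast \cdots \ast \X_{i_k}$ whose shifts lie in an interval $[a,b]$ is contained in $\X_a \ast \X_{a+1} \ast \cdots \ast \X_b$, yielding extension closure.

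Granted the key lemma, I apply it to both $U$ and $V$; both satisfy (S1.5) by hypothesis, the latter because $V$ is silting. This produces $U \in \Add(V)[a_1] \ast \cdots \ast \Add(V)[b_1]$ and $V \in \Add(U)[a_2] \ast \cdots \ast \Add(U)[b_2]$ for suitable $a_i \leq b_i$. Setting $\ell = \max(-a_1,\, b_2)$ and $n = \max(\ell + b_1,\, \ell - a_2,\, 1)$, padding both decompositions with zero objects gives $U \in \Add(V)[-\ell] \ast \cdots \ast \Add(V)[n - \ell]$ and $V \in \Add(U)[\ell - n] \ast \cdots \ast \Add(U)[\ell]$. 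Translating via Lemma \ref{lem:dim-codim}, this yields $\dim_{\Add V}(U[\ell]) \leq n$ and $\codim_{\Add(U[\ell])}(V) \leq n$. Together with (S1.5) for $U[\ell]$ (equivalent to (S1.5) for $U$, since $\Add(U[\ell]) = \Add(U)[\ell]$), Theorem \ref{thm:inter-n-silt}(e)$\Rightarrow$(a) concludes that $U[\ell]$ is an $n$-$V$-intermediate silting object. The main obstacle will be the extension closure in the key lemma: chaining the swap $\X_j \ast \X_i \subseteq \X_i \ast \X_j$ (valid only when $i \leq j$) into a full bubble sort, and verifying that zero-padding in iterated $\ast$-products legitimately embeds shorter decompositions into longer ones, are the two non-routine steps.
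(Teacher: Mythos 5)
Your proof is correct, and the overall strategy agrees with the paper's: translate the equality $\thick\Add(U)=\thick\Add(V)$ into finite bounds $\dim_{\Add V}U[\ell]\leq n$ and $\codim_{\Add(U[\ell])}V\leq n$, then invoke Theorem \ref{thm:inter-n-silt}(e). Where you differ is in how the bounds are obtained. The paper is terse and delegates the work to citations, using \cite[Lemma 3.16]{NS} for (i)$\Rightarrow$(ii) and \cite[Proposition 4.3]{An-19} (the theory of silting subcategories) to produce an $\ell$ with $V\in\Add(U)[-\ell]\ast\cdots\ast\Add(U)[\ell]$ in (ii)$\Rightarrow$(i). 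You instead supply a self-contained key lemma: under (S1.5), $\thick\Add(U)$ equals the union of the finite intervals $\Add(U)[a]\ast\cdots\ast\Add(U)[b]$. Your verification is sound: for $i\le j$, the vanishing $\Hom(\Add(U)[i],\Add(U)[j][1])=\Hom(\Add(U),\Add(U)[j-i+1])=0$ (from (S1.5), since $\Add$-objects are summands of coproducts of copies of $U$) splits any triangle witnessing membership in $\Add(U)[j]\ast\Add(U)[i]$, which gives the swap; repeated indices collapse by the same splitting ($\Add(U)[j]\ast\Add(U)[j]=\Add(U)[j]$); gaps are filled by zero-padding using the trivial triangle; summand closure of each interval is exactly Lemma \ref{lem:compuneri}(4); and closure under shifts is trivial. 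This in effect reproves the needed content of \cite[Proposition 4.3]{An-19} directly in the present setting. Your route is more elementary and avoids the external dependency at the cost of length, while the paper's is shorter but relies on the cited machinery; both land on the same explicit $\ell$ and $n$ and then finish identically via Theorem \ref{thm:inter-n-silt}(e).
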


\begin{proof}
	(i)$\Rightarrow$(ii) This follows from Theorem \ref{thm:inter-n-silt} and \cite[Lemma 3.16]{NS}.
	
	(ii)$\Rightarrow$(i) From (ii) it follows that the class $\Add(U)$ is silting in $\Add(V)$. From \cite[Proposition 4.3]{An-19} it follows that there exists $\ell$ such that $V\in \Add(U)[-\ell]\ast \dots\ast \Add(U)[\ell]$, hence $V\in \codim_{\Add(U)[-\ell]}V<\infty$ (or, $V[\ell]\in U^{\perp_{>0}}$). A similar argument assures that we can chose $\ell$ such that $\dim_{\Add V}U[-\ell]<\infty$, and we can apply Theorem \ref{thm:inter-n-silt} to obtain the conclusion.  
\end{proof}

\begin{rem}
All the results from this section can be dualized. More precisely, if $\D$ is a category with direct products, and $V$ is a cosilting object, we will say that $U$ is an \textit{$n$-$V$-intermediate cosilting object} if $U$ satisfies the hypotheses of Theorem \ref{thm:main-cosilting} for $\B={^{\perp_{>0}}V}$. Theorem \ref{thm:inter-n-silt} can be dualized to obtain a characterization for these objects. Also, Corollary \ref{cor-shifted-silting} can be dualized.
\end{rem}


 \subsection*{Acknowledgements}
I would like to thank to Michal Hrbek. The results from Section \ref{sect2} were carried out on the strenght of the conversations we had.   
 
The research of S.\ Breaz is supported by a grant of the Ministry of Research, Innovation and Digitization, CNCS/CCCDI--UEFISCDI, project number PN-III-P4-ID-PCE-2020-0454, within PNCDI III.


\end{document}